 \renewcommand{\epsilon}{\varepsilon}
 \newcommand{\newsection}[1]
  {\section{#1}\setcounter{theorem}{0} \setcounter{equation}{0}\par\noindent}
   \newtheorem{theorem}{Theorem}[section]
   \newtheorem{lemma}[theorem]{Lemma}
 \newtheorem{corr}[theorem]{Corollary}
 \newtheorem{proposition}[theorem]{Proposition}
 \newtheorem{deff}[theorem]{Definition}
 \newtheorem{remark}[theorem]{Remark}
  \numberwithin{equation}{section}
 \newcommand{\bth}{\begin{theorem}}
 \newcommand{\ble}{\begin{lemma}}
 \newcommand{\bcor}{\begin{corr}}
 \newcommand{\bdeff}{\begin{deff}}
 \newcommand{\bprop}{\begin{proposition}}
 \def\be{\begin{equation}}
\def\ee{\end{equation}}
\def\bt{\begin{theorem}}
\def\et{\end{theorem}}
\def\ba{\begin{array}}
\def\ea{\end{array}}
\def\bl{\begin{lemma}}
\def\el{\end{lemma}}
 \newcommand{\ele}{\end{lemma}}
 \newcommand{\ecor}{\end{corr}}
 \newcommand{\edeff}{\end{deff}}
 \newcommand{\eprop}{\end{proposition}}
 \newcommand{\eps}{\varepsilon}
 \renewcommand{\Pi}{\varPi}
 \renewcommand{\epsilon}{\varepsilon}
  \newcommand{\R}{{\mathbb R}}
\title[Global existence of semilinear wave equations with damping] { Global existence for semilinear wave equations with scaling invariant damping in 3-D}
\date{\today}
\begin{document}
\maketitle

\centerline{
\author{
Ning-An Lai*
  \footnote{*Corresponding Author: Institute of Nonlinear Analysis and Department of Mathematics, Lishui University, China.
 {\it Email: ninganlai@lsu.edu.cn}}
 }
 \and
Yi Zhou
  \footnote{ School of Mathematical Sciences, Fudan University, Shanghai, China. and Department of Mathematical Sciences, Jinan University, Guangzhou 510632, China.
 {\it Email: yizhou@fudan.edu.cn}
 }
  }

\begin{abstract}

Global existence for small data Cauchy problem of semilinear wave equations with scaling invariant damping in 3-D is established in this work, assuming that the data are radial and the constant in front of the damping belongs to $[1.5, 2)$. The proof is based on a weighted $L^2-L^2$ estimate for inhomogeneous wave equation, which is established by interpolating between energy estimate and Morawetz type estimate.

\end{abstract}
{\bf Keywords: semilinear wave equation, scaling invariant damping, global existence, weighted $L^2-L^2$ estimate}

{\bf MSC2020:} {35L71, 35L05, 35B40}

\newsection{Introduction}

This paper is devoted to studying radial solution of the semilinear wave equation with scaling invariant damping in 3-D
\begin{equation} \label{scalwave}
\begin{cases}
\varphi_{tt}-\varphi_{rr}-\frac{2\varphi_r}{r}+\frac{\mu
\varphi_t}{t+2}=|\varphi|^p,\\
t=0:\, \varphi=\eps\varphi_0(r),\varphi_t=\eps\varphi_1(r),
\end{cases}
\end{equation}
where $\eps$ denotes the smallness of the initial data. The nonnegative initial data come from the energy space and have compact support
\begin{equation}\label{supp}
\begin{aligned}
supp~\varphi_0(r), \varphi_1(r)\subset\{r\big|r\le 1\}.
\end{aligned}
\end{equation}

This kind of semilinear wave equation with time dependent variable coefficients in front of the damping has been widely studied recently, and the general model is
\begin{equation}\label{semimodel}
\begin{aligned}
\Phi_{tt}-\Delta\Phi+\frac{\mu}{(1+t)^\beta}\Phi_t=|\Phi|^p.
\end{aligned}
\end{equation}
According to the asymptotic behavior of the solution of the corresponding linear equation, we have four different type dampings, thus
\begin{center}
\begin{tabular}{|c|c|c|}
\hline
$\beta\in (-\infty, -1)$ & overdamping &
\begin{tabular}{c}
solution does not \\
decay to zero
\end{tabular}
\\
\hline
$\beta\in [-1, 1)$ & effective &
\begin{tabular}{c}
solution behaves like\\
that of heat equation
\end{tabular}
\\
\hline
$\beta=1$ &
\begin{tabular}{c}
scaling invariant\\
weak damping
\end{tabular} &
\begin{tabular}{c}
the asymptotic behavior\\depends on $\mu$
\end{tabular}
\\
 \hline
$\beta\in(1,\infty)$ & scattering &
\begin{tabular}{c}
solution behaves like
that\\ of wave equation without damping
\end{tabular}\\
\hline
\end{tabular}
\end{center}
We refer to the works \cite{HO, MN, N1, Wir1, Wir2, Wir3} and the conclusive table in \cite{LT20}. Based on this classification, people try to figure out the critical power($p_c(n)$, $n$ is the dimension) for \eqref{semimodel} for each case. Here \lq\lq critical power" denotes the threshold value of $p$ which divides the problem into blow-up and global existence parts. If $\beta<-1$, there is global solution for all $p>1$, see \cite{IWnew}. If $\beta\in [-1, 1)$, it has been showed that the critical power is exactly the same as Fujita number, i.e., $p_c(n)=p_F(n)=1+\frac 2n$, see \cite{DLR13, FIW, ISW, LaZ, LZ, LNZ12, TY, WY17}. If $\beta>1$, due to the blow-up results in \cite{LT, WY, Wak14} for $1<p<p_S(n)$ and $n\ge 1$ and global existence results in \cite{LW} for $p>p_S(n)$ and $n=3, 4$, we may conjecture the critical power is Strauss exponent, which is the critical power for semilinear wave equation without damping and the positive root of the quadratic equation
\begin{equation}
\label{quadratic}
\gamma(p,n):=2+(n+1)p-(n-1)p^2=0.
\end{equation}
The case of $\beta=1$, which we mean the scaling invariant damping, has attracts more and more attention, due to the reason that the critical power also depends on the size of the constant $\mu$ in front of the damping. Roughly speaking, if $\mu$ is relatively large, the critical power will be Fujita type while if $\mu$ is relatively small the critical power will be Strauss type. According to the results in \cite{DABI, DL1, WY14_scale} we know the critical power is $p_F(n)$ at least for
 \[
 \mu\geq
 \left\{
 \begin{array}{cl}
5/3 & \mbox{for}\ n=1,\\
3 & \mbox{for}\ n=2,\\
n+2 & \mbox{for}\ n\geq 3.
\end{array}
\right.
\]
If $\mu=2$, it is interesting to see that the equation can be changed into a one without damping by a transformation, and due to \cite{D-L, DLR15, KS, Lai, Pa, wak16}, we now know that the critical power is
\[
p_c(n)=\max\{p_F(n), p_S(n+2)\}.
\]
If $\mu\neq 2$, the Strauss type blow-up result was first established in \cite{LTW}, which was improved by \cite{IS}. We refer the reader to \cite{LT20, N2} for more detailed introduction for the related results.

However, till the moment, there is no global result for $\mu\neq 2$ and
\[
0<\mu<\frac{n^2+n+2}{n+2}~and~p> p_S(n+\mu),
\]
which means that in this case the critical power($p=p_S(n+\mu)$) is still unfixed. In this paper, we will show global existence for \eqref{scalwave} in $\mathbb{R}^3$ for
\[
1.5\le\mu<2~and~ p_S(3+\mu)<p\le 2,
\]
and this result will confirm the critical power for some range of $\mu$ in $\R^3$. The proof is quite elementary, and the key step is to establish a weighted $L^2-L^2$ estimate by interpolating between an energy estimate and a Morawetz type estimate.
\begin{remark}
The similar idea has been used in \cite{Lai} to show the global existence of non-radial solution for \eqref{scalwave} with $\mu=0, p>p_S(3)$ and $\mu=2, p>p_S(5)$ in $\mathbb{R}^3$.
\end{remark}

The main result is as follows.
\begin{theorem} \label{thm1.1}
Let $1.5\le\mu<2$ and $p_S(n+\mu)<p\le 2$. And $\varepsilon$ represents the smallness of the initial data. Assuming the support of the initial data satisfy \eqref{supp}. Then there exists a positive constant $\eps_0$ such that if $0<\eps<\eps_0$, problem \eqref{scalwave} has global solution.
\end{theorem}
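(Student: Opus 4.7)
My plan for Theorem \ref{thm1.1} is to run a standard Picard/contraction-mapping scheme in a weighted $L^2$-based function space, the key input being a sharp weighted $L^2$--$L^2$ estimate for the inhomogeneous linear equation, obtained, as the paper suggests, by interpolation between energy and Morawetz-type estimates. I would first remove the first-order damping by the substitution $\varphi(t,r) = (t+2)^{-\mu/2} v(t,r)$, which turns \eqref{scalwave} into the radial wave equation
\begin{equation*}
v_{tt} - v_{rr} - \frac{2 v_r}{r} + \frac{\mu(2-\mu)}{4(t+2)^2}\, v \;=\; (t+2)^{\mu(1-p)/2}\,|v|^p,
\end{equation*}
i.e., a wave equation with a small, integrable, positive Klein--Gordon perturbation and a source whose explicit decay $(t+2)^{\mu(1-p)/2}$ encodes the shift of the effective dimension from $3$ to $3+\mu$. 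Setting $w = r v$ converts this into the half-line equation $w_{tt} - w_{rr} + \frac{\mu(2-\mu)}{4(t+2)^2} w = r^{1-p}(t+2)^{\mu(1-p)/2}|w|^p$, which is the object I would actually estimate.

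For the linear step I would derive two a priori bounds. The first is the standard energy identity obtained by testing with $w_t$; the damping contributes a nonnegative dissipative term which is simply discarded, leaving a uniform-in-time control of $\|(w_t,w_r)(t,\cdot)\|_{L^2_r}$ in terms of the data and $\|F\|_{L^1_t L^2_r}$. The second is a Morawetz-type bound obtained from a suitably chosen multiplier built out of the scaling and inverted time-translation vector fields (a combination like $\alpha w + (t+r)w_t + (r-t)w_r$), which after integration by parts produces a space-time $L^2$ bound for $w$ weighted by inverse powers of $(1+t+r)$ and $(1+|t-r|)$, controlled by a corresponding weighted norm of $F$ and the data. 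A complex-interpolation (or H\"older) argument between these two inequalities would then yield a one-parameter family of weighted $L^2_{t,r}$--$L^2_{t,r}$ bounds of the schematic form
\begin{equation*}
\bigl\|(1+t+r)^{-a}(1+|t-r|)^{-b}\,w\bigr\|_{L^2_{t,r}} \;\leq\; C\varepsilon \,+\, C\bigl\|(1+t+r)^{a'}(1+|t-r|)^{b'}\,F\bigr\|_{L^2_{t,r}},
\end{equation*}
with the exponents $a,b,a',b'$ constrained by the interpolation parameter.

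The nonlinear closure consists in substituting $F = r^{1-p}(t+2)^{\mu(1-p)/2}|w|^p$ on the right, using the finite-speed-of-propagation confinement $r \leq t+1$ from \eqref{supp}, and applying H\"older's inequality to reproduce the left-hand-side weighted norm of $w$ raised to the power $p$. Tracking the exponents, the condition for a strictly positive $(1+t)$-gain collapses to $\gamma(p,\,3+\mu) > 0$ with $\gamma$ as in \eqref{quadratic}, that is, precisely $p > p_S(3+\mu)$. The upper bound $p \leq 2$ then enters via the Lipschitz-type difference estimate $\bigl||\varphi|^p - |\psi|^p\bigr| \leq C(|\varphi|+|\psi|)^{p-1}|\varphi-\psi|$, which turns the iteration into a contraction on a ball of radius $O(\varepsilon)$ and produces the global solution.

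The main obstacle I anticipate lies in the Morawetz step: the decaying potential $\frac{\mu(2-\mu)}{4(t+2)^2}\,w$ (equivalently the original damping) must be absorbed inside the multiplier identity, yet its contribution does not automatically come with a favorable sign nor with a weight matching that generated by the principal part. Calibrating the multiplier together with the interpolation parameter, so that the negative weights on the left, the positive weights on the right, the gain $r^{1-p}$ from the radial reduction, and the decay $(t+2)^{\mu(1-p)/2}$ all simultaneously align on the Strauss line for effective dimension $3+\mu$, is the delicate part; I would expect the lower bound $\mu \geq 1.5$ to emerge precisely as the threshold at which this alignment is feasible under the constraint $p \leq 2$.
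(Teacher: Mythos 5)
Your linear scaffold matches the paper in outline (remove the damping by $(t+2)^{\mu/2}$, reduce to the radial half-line, energy estimate, Morawetz-type estimate, interpolate, run a contraction), but both the Morawetz step and the nonlinear closure diverge from the paper in ways that leave a real gap.

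On the Morawetz step: you propose the classical multiplier $\alpha w + (t+r)w_t + (r-t)w_r$, which after integration by parts produces a \emph{space-time} $L^2$ bound on $w$ itself. The paper instead passes to null coordinates $u=(t+2+r)/2$, $\bar u=(t+2-r)/2$, sets $\phi=r\psi$, and multiplies the equation $\phi_{u\bar u}+\tfrac{\mu(2-\mu)}{4(u+\bar u)^2}\phi=G$ by $u^3(u-\bar u)\phi_u$. This yields a $\sup_{\bar u}\bigl\|u^{3/2}(u-\bar u)^{1/2}\phi_u\bigr\|_{L^2_u}$ bound, i.e.\ control of the \emph{outgoing derivative} in an $L^\infty_{\bar u}L^2_u$ framework, with the Duhamel integral taken in $\bar u$. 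Interpolation with the energy estimate then produces the family of weights $u^{3s}(u-\bar u)^s$ on $\phi_u$, not a one-parameter family of weights on $w$. This is not a cosmetic difference: the paper's absorption of the potential term is handled by a short Hardy inequality and poses no difficulty for any $0<\mu<2$, so the obstacle you flag there is not the actual bottleneck.

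The genuine gap is in the nonlinear closure. You write that one "applies H\"older's inequality to reproduce the left-hand-side weighted norm of $w$ raised to the power $p$." But if the left side is a weighted $L^2$ norm of $w$ (or of $\phi_u$), the right side after substituting the nonlinearity is a weighted $L^2$ norm of $|w|^p$, i.e.\ an $L^{2p}$-type quantity, and H\"older alone cannot convert that into a power of a weighted $L^2$ norm. An additional ingredient is indispensable: a pointwise (Sobolev-type) bound on the solution in terms of the weighted $L^2$ quantity that the linear estimate controls. This is exactly what the paper's Section 3 supplies, namely
\begin{equation*}
\sup_{u}\,u^{2/p}\,\phi^2(u,\bar u)\;\lesssim\;\int_{\max(\bar u,2-\bar u)}^{+\infty} u^{3s}(u-\bar u)^s\,\phi_u^2\,du,
\qquad s=\tfrac14+\tfrac1{2p},
\end{equation*}
with a companion statement for $\bar u\le 1$ carrying an extra boundary term. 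One then splits $|\bar\phi|^{2p}=|\bar\phi|^{2(p-1)}\cdot|\bar\phi|^2$, bounds the first factor in $L^\infty$ by the Sobolev-type lemma, and bounds the remaining $|\bar\phi|^2$ by a Hardy-type integration by parts, producing a $\bar u^{-\gamma}$ decay with $\gamma>2$ that makes the $\bar u$-integral converge. Without such a pointwise inequality your scheme cannot close, and this is the central technical idea of the paper that your proposal omits.

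Finally, the constraint $\mu\ge 1.5$ does not emerge from the Morawetz/potential absorption as you anticipate; it comes from the exponent bookkeeping in this nonlinear step (the conditions $\alpha>0$, $\beta\ge 0$, $1-\beta\ge s$ on the auxiliary exponents in the paper's \eqref{cond1}), jointly with $p_S(3+\mu)<p\le 2$. Likewise $p\le 2$ is needed both for the Lipschitz estimate on $|\varphi|^p$ and to keep the exponent $s=\tfrac14+\tfrac1{2p}$ and the associated $\alpha,\beta,\gamma$ in the admissible ranges, not solely for the contraction.
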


\section{Weighted $L^2-L^2$ estimate for inhomogeneous wave equation}

We first take the transformation
\[
\psi(t, r)=(t+2)^{\frac{\mu}{2}}\varphi(t, r),
\]
then $\psi(t, r)$ satisfies
\begin{equation}\label{1.1}
\left \{
\begin{aligned}
&\psi_{tt}-\psi_{rr}-\frac{2\psi_r}{r}+\frac{\mu(2-\mu)\psi}{4{(t+2)}^2}
=\frac{{|\psi|}^p}{{(t+2)}^{\frac{\mu(p-1)}{2}}},\\
&\psi(0, r)=2^{\frac{\mu}{2}}\eps\varphi_0(r), \psi_t(0, r)=\eps\Big \{\frac{\mu}{2}2^{\frac{\mu}{2}-1}\varphi_0(r)+
2^{\frac{\mu}{2}}\varphi_1(r)\Big\}.
\end{aligned} \right.
\end{equation}

Let
\[
u=\frac{t+2+r}{2}, \overline{u}=\frac{t+2-r}{2}
\]
and
\[
\phi(u, \overline{u})=(u-\overline{u})\psi(u+\overline{u}-2, u-\overline{u}),
\]
then $\phi$ satisfies the following system
\begin{equation*}
\begin{cases}
\phi_{u\bar{u}}+\frac{\mu(2-\mu)\phi}{4{(u+\bar{u})}^2}=\frac{{|\phi|}^p}
{{(u-\bar{u})}^{p-1}{(u+\bar{u})}^{\frac{\mu(p-1)}{2}}},\\
t=0:\, \phi=r\psi_0,\phi_t=r\psi_1.
\end{cases}
\end{equation*}

Next we are going to establish the weighted $L^2-L^2$ estimate for the following inhomogeneous second order partial differential equation
\begin{equation}\label{inhom}
\begin{cases}
\phi_{u\bar{u}}+\frac{\mu(2-\mu)\phi}{4{(u+\bar{u})}^2}=G(u,\bar{u}), \\
t=0:\, \phi=r\psi_0,\phi_t=r\psi_1.
\end{cases}
\end{equation}
First we show a standard energy estimate for \eqref{inhom}.
\begin{lemma}[energy estimate] \label{lem1}
Let $\phi(u, \bar{u})$ solve \eqref{inhom}, and $\bar{U}$ be a positive constant, then we have
\begin{equation}\label{energy}
\begin{aligned}
&\sup\limits_{\frac{1}{2}\leq \bar{u} \leq \bar{U}} {\left(\int_{\max (\bar{u},2-\bar{u})}^{+\infty} {\phi_u}^2 du\right)}^{\frac{1}{2}} \\
\lesssim &{\eps\left( {\| \psi_0 \|}^2_{H^1(\mathbb{R}^3)}+{\| \psi_1 \|}^2_{L^2(\mathbb{R}^3)}\right)}^{\frac{1}{2}}+\int_{\frac{1}{2}}^{\bar{U}} {\left(\int_{\max (\bar{u},2-\bar{u})}^{+\infty} G^2 du \right)}^{\frac{1}{2}}d\bar{u}.
\end{aligned}
\end{equation}
\end{lemma}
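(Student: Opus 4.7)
The plan is a standard multiplier argument in the characteristic coordinates $(u,\bar u)$. Multiplying \eqref{inhom} by $\phi_u$ and rearranging yields the pointwise identity
\[
\tfrac12\partial_{\bar u}(\phi_u^2) + \tfrac12\partial_u(V\phi^2) - \tfrac12(\partial_u V)\phi^2 = G\phi_u, \qquad V:=\tfrac{\mu(2-\mu)}{4(u+\bar u)^2}.
\]
For $\mu\in[1.5,2)$ one has $\mu(2-\mu)>0$, so $V\ge 0$ and $\partial_u V<0$; the zeroth-order term $-(\partial_u V)\phi^2/2$ therefore has a favorable sign and can be discarded when passing to an upper bound.

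Next, I integrate in $u$ from $f(\bar u):=\max(\bar u,2-\bar u)$ to $+\infty$ (the boundary at infinity vanishes by finite speed of propagation from the support assumption \eqref{supp}). Setting $E(\bar u):=\int_{f(\bar u)}^\infty\phi_u^2\,du$ and applying Leibniz together with Cauchy--Schwarz on $\int G\phi_u\,du$, I arrive at
\[
\tfrac12 E'(\bar u) \le -\tfrac{f'(\bar u)}{2}\phi_u^2(f,\bar u) + \tfrac12 V(f,\bar u)\phi^2(f,\bar u) + \|G(\cdot,\bar u)\|_{L^2_u}\,E^{1/2}(\bar u).
\]
For $\bar u>1$ the lower limit is $u=\bar u$ (the axis $r=0$), $f'=+1$, and $\phi=r\psi$ vanishes there, so both boundary terms are nonpositive and one gets $(E^{1/2})'\le \|G\|_{L^2_u}$. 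For $\bar u\le 1$ the lower limit is $u=2-\bar u$ (the initial line $t=0$, with $r=2-2\bar u$), $f'=-1$, and the boundary terms combine into a positive data quantity $B(\bar u):=\phi_u^2+V\phi^2$ evaluated at $t=0$.

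The third step converts these inequalities into the desired integrated bound. Set $M(\bar u):=\sup_{1/2\le\bar u'\le\bar u}E^{1/2}(\bar u')$; integrating $E'\le B+2\|G\|_{L^2_u}E^{1/2}$ from $1/2$, taking the supremum and solving the resulting quadratic in $M$ yields
\[
M(\bar u) \le \left(\int_{1/2}^{\min(\bar u,1)} B(\bar u')\,d\bar u'\right)^{\!1/2} + 2\int_{1/2}^{\bar u}\|G(\cdot,\bar u')\|_{L^2_u}\,d\bar u'.
\]
Finite speed of propagation also gives $E(1/2)=0$, since $\bar u=1/2$ is the outer null cone of the initial support.

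The only nontrivial remaining step is to bound $\int_{1/2}^1 B\,d\bar u'$ by $\eps^2(\|\psi_0\|_{H^1(\R^3)}^2+\|\psi_1\|_{L^2(\R^3)}^2)$. After the substitution $r=2-2\bar u'$ this reduces to controlling one-dimensional integrals such as $\int_0^1(\psi_0+r\partial_r\psi_0)^2\,dr$ by three-dimensional $r^2$-weighted norms. The key identity, obtained by expanding and integrating the cross term by parts, is
\[
\int_0^1(\psi_0+r\partial_r\psi_0)^2\,dr = \bigl[r\psi_0^2\bigr]_0^1 + \int_0^1 r^2(\partial_r\psi_0)^2\,dr,
\]
and the boundary value $\psi_0(1)$ vanishes because radial $H^1(\R^3)$ functions with support in $\{r\le 1\}$ are continuous on $(0,\infty)$. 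I expect this last bookkeeping step --- matching one-dimensional null-coordinate integrals to three-dimensional Sobolev norms --- to be the one place where the radial structure and compact support enter in an essential way; the rest is a routine energy argument.
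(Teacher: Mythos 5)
Your argument is correct and is essentially the same as the paper's: multiply by $\phi_u$, integrate by parts in $u$ to expose the nonnegative zeroth-order term, drop it, control the boundary terms at $r=0$ and $t=0$, then integrate in $\bar u$ with Cauchy--Schwarz on the source term. You are somewhat more careful than the paper in tracking the boundary contributions from the moving lower limit $\max(\bar u,2-\bar u)$ and in converting the null-line data integrals into the $H^1(\R^3)\times L^2(\R^3)$ norm, but the underlying mechanism is identical.
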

\begin{proof}
Multiplying the equation in \eqref{inhom} by $\phi_u$, we get
$$
\partial_{\bar{u}} \frac{{\phi_u}^2}{2}+\frac{\mu(2-\mu){\partial_u \phi}^2}{8{(u+\bar{u})}^2}=\phi_u G,
$$
and then integrating it with respect to $u$ over $[\max (\bar{u},2-\bar{u}), +\infty)$ one has
\begin{eqnarray*}
&&\partial_{\bar{u}} \int_{\max (\bar{u},2-\bar{u})}^{+\infty} \frac{{\phi_u}^2}{2} du+\mu(2-\mu)\int_{\max (\bar{u},2-\bar{u})}^{+\infty} \frac{\phi^2}{4{(u+\bar{u})}^3} \\
&&=-\frac{\psi^2(t, 0)}{2}+\frac{\eps}{2}\left[\psi_0(r)+r\partial_r\psi_0(r)+r\psi_1(r)\right]+\int_{\max (\bar{u},2-\bar{u})}^{+\infty} \phi_u G du.\\
\end{eqnarray*}
Integrating the above equality with respect to $\bar{u}$ over $[\frac12, \bar{U}]$ yields
\begin{equation}\label{ener2}
\begin{aligned}
&\sup\limits_{\frac{1}{2}\leq \bar{u} \leq \bar{U}} \int_{\max (\bar{u},2-\bar{u})}^{+\infty} {\phi_u}^2 du \\
\leq &C\eps\left( {\|\psi_0 \|}^2_{H^1(\mathbb{R}^3)}+{\| \psi_1 \|}^2_{L^2(\mathbb{R}^3)}\right)+\int_{\frac{1}{2}}^{\bar{U}} {\left(\int_{\max (\bar{u},2-\bar{u})}^{+\infty} {\phi_u}^2 du \right)}^{\frac{1}{2}}{\left(\int_{\max (\bar{u},2-\bar{u})}^{+\infty} G^2 du \right)}^{\frac{1}{2}}d\bar{u},
\end{aligned}
\end{equation}
where we used the fact $0<\mu(2-\mu)<1$ and the solution vanishes when $\bar{u}=\frac12$. And hence the energy estimate \eqref{energy} follows.
\end{proof}

On the other hand, we may establish a Morawetz type estimate for \eqref{inhom}.
\begin{lemma}[Morawetz type estimate] \label{lem2}
Let $\phi(u, \bar{u})$ solve \eqref{inhom}, and $\bar{U}$ be a positive constant, then we have
\begin{equation}\label{Mora}
\begin{aligned}
&\sup\limits_{\frac{1}{2}\leq \bar{u} \leq \bar{U}} {\left(\int_{\max (\bar{u},2-\bar{u})}^{+\infty} u^3(u-\bar{u}){\phi_u}^2 du\right)}^{\frac{1}{2}} \\
\lesssim &\eps{\left( {\|\psi_0 \|}^2_{H^1(\mathbb{R}^3)}+{\| \psi_1 \|}^2_{L^2(\mathbb{R}^3)}\right)}^{\frac{1}{2}}+\int_{\frac{1}{2}}^{\bar{U}} {\left(\int_{\max (\bar{u},2-\bar{u})}^{+\infty} u^3(u-\bar{u})G^2 du \right)}^{\frac{1}{2}}d\bar{u}
\end{aligned}
\end{equation}
\end{lemma}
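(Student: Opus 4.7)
The plan is to imitate the proof of Lemma~\ref{lem1} with the Morawetz-type weight $w(u,\bar u):=u^3(u-\bar u)$ replacing the trivial weight. Multiplying \eqref{inhom} by $w\phi_u$ and using
$$w\phi_u\phi_{u\bar u}=\partial_{\bar u}\!\left(\frac{w\phi_u^2}{2}\right)+\frac{u^3\phi_u^2}{2},$$
the extra positive term arising because $w_{\bar u}=-u^3$, I integrate in $u$ over $[a,+\infty)$ with $a=a(\bar u):=\max(\bar u,2-\bar u)$ and apply the Leibniz rule. Setting $E(\bar u):=\int_a^{+\infty}w\phi_u^2/2\,du$, this yields
$$\frac{dE}{d\bar u}+\int_a^{+\infty}\!\frac{u^3\phi_u^2}{2}\,du+\frac{\mu(2-\mu)}{8}\int_a^{+\infty}\!\frac{w\,\partial_u\phi^2}{(u+\bar u)^2}\,du=\int_a^{+\infty}\! w\phi_u G\,du+\mathcal B(\bar u),$$
where $\mathcal B(\bar u)$ collects the boundary contributions that appear on the portion $\bar u\in[1/2,1]$, where $a=2-\bar u$ lies on $\{t=0\}$; after integration in $\bar u$ these are controlled by $\epsilon^2(\|\psi_0\|_{H^1}^2+\|\psi_1\|_{L^2}^2)$.

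A second integration by parts in $u$ rewrites the mass term as
$$\int_a^{+\infty}\!\frac{w\,\partial_u\phi^2}{(u+\bar u)^2}\,du=-\frac{w(a)\phi^2(a,\bar u)}{(a+\bar u)^2}-\int_a^{+\infty}\!\frac{u^2(2u^2+3u\bar u-3\bar u^2)}{(u+\bar u)^3}\phi^2\,du,$$
the upper boundary vanishing by finite propagation. The lower-boundary contribution is absorbed into $\mathcal B(\bar u)$, but the volume integral returns to the right-hand side with an \emph{unfavourable} sign, and controlling this $\phi^2$-term is the main obstacle of the proof.

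To absorb it I combine the pointwise bound $\dfrac{u^2(2u^2+3u\bar u-3\bar u^2)}{(u+\bar u)^3}\le 3u$ (valid on $u\ge\bar u\ge 1/2$) with the weighted Hardy-type inequality
$$\int_a^{+\infty}\! u\phi^2\,du\le \int_a^{+\infty}\! u^3\phi_u^2\,du,$$
which I establish by integrating by parts against the antiderivative $u^2/2$, applying Cauchy--Schwarz and using that $\phi$ decays as $u\to+\infty$. For $\mu\in[3/2,2)$ one has $\mu(2-\mu)\le 3/4$, so the bad term is at most $\tfrac{9}{32}\int u^3\phi_u^2\,du<\tfrac12\int u^3\phi_u^2\,du$, and a strictly positive fraction of the Morawetz-type term $\int u^3\phi_u^2/2\,du$ survives on the left.

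With this absorption in hand, the forcing integral is handled by $\int w\phi_u G\,du\le\sqrt{2E(\bar u)}\bigl(\int wG^2\,du\bigr)^{1/2}$, and integrating the resulting differential inequality in $\bar u$ over $[1/2,\bar U]$ yields a quadratic inequality in $\bigl(\sup_{[1/2,\bar U]}E\bigr)^{1/2}$. Its standard resolution then gives \eqref{Mora}. The hardest step is the Hardy-type absorption; it is precisely what constrains the admissible size of $\mu(2-\mu)$ and is in line with the hypothesis $\mu\ge 3/2$.
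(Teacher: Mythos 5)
Your proposal follows essentially the same route as the paper: multiply \eqref{inhom} by $u^3(u-\bar u)\phi_u$, collect a positive term $\tfrac12\int u^3\phi_u^2\,du$ from the $\bar u$-derivative of the weight, integrate the mass term by parts in $u$, bound the resulting coefficient pointwise by a constant times $u$, and invoke a weighted Hardy inequality $\int u\phi^2\le\int u^3\phi_u^2$ (plus data boundary terms) to absorb it. The paper's pointwise bound on $u^2(2u^2+3u\bar u-3\bar u^2)/(u+\bar u)^3$ is $\tfrac{11}{4}u$ (completing the square in $r=u-\bar u$) whereas you obtain $3u$; both are adequate. Your closing remark, however, is incorrect: the absorption does \emph{not} require $\mu\ge 3/2$. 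Since $\mu(2-\mu)\le 1$ for every $\mu\in(0,2)$, your own threshold $\tfrac{3}{8}\mu(2-\mu)<\tfrac12$ is satisfied automatically, and indeed the paper explicitly notes that this Morawetz-type estimate holds for all $0<\mu<2$; the restriction $\mu\ge 3/2$ in Theorem~\ref{thm1.1} comes from the nonlinear iteration (the exponents $\alpha,\beta,\gamma$ in Section~4), not from this lemma.
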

\begin{proof}
Multiplying the equation in \eqref{inhom} with $(u-\bar{u})\phi_{u}$ yields
$$
\partial_{\bar{u}} \left[(u-\bar{u}) \frac{{\phi_u}^2}{2}\right]+\frac{{\phi_u}^2}{2}+\frac{\mu(2-\mu)(u-\bar{u}){\partial_u \phi}^2}{8{(u+\bar{u})}^2}=(u-\bar{u})\phi_u G,
$$
integrating which with respect to $u$ over $[\max (\bar{u},2-\bar{u}), +\infty)$ we come to
\begin{equation}\label{Mora1}
\begin{aligned}
&\partial_{\bar{u}} \int_{\max (\bar{u},2-\bar{u})}^{+\infty} u^3(u-\bar{u}) \frac{{\phi_u}^2}{2} du+\frac{1}{2}\int_{\max (\bar{u},2-\bar{u})}^{+\infty} u^3 {\phi_u}^2 du \\
&-\mu(2-\mu)[\int_{\max (\bar{u},2-\bar{u})}^{+\infty}\frac{(\partial_u [u^3(u-\bar{u})](u+\bar{u})-2u^3(u-\bar{u}))}{8{(u+\bar{u})}^3} \phi^2 du ]\\
=&\frac{r(r+2)^3}{16}\left[\psi_0(r)+r\psi_1(r)+r\partial_r\psi_0(r)\right]^2
+\frac{\mu(2-\mu)}{256}r^3(r+2)^3\psi_0^2(r)\\
&+\int_{\max (\bar{u},2-\bar{u})}^{+\infty} u^3(u-\bar{u})\phi_u G du.\\
\end{aligned}
\end{equation}
A direct calculation shows that
\begin{eqnarray*}
&&\partial_u [u^3(u-\bar{u})](u+\bar{u})-2u^3(u-\bar{u}) \\
&&=(3u^2(u-\bar{u})+u^3)(u+\bar{u})-2u^3(u-\bar{u}) \\
&&=u^2\{(3r+u)(2u-r)-2ur\} \\
&&=u^2\{2u^2-3r^2+3ur\}\\
&&=u^2\{2u^2+\frac{3}{4}u^2-3(r^2-ur+\frac{u^2}{4})\}\\
&&\leq (2+\frac{3}{4})u^4 \leq (2+\frac{3}{4})u{(t+2)}^3,
\end{eqnarray*}
therefore, we obtain from \eqref{Mora1}
\begin{equation}\label{Mora2}
\begin{aligned}
&\frac{1}{2}\partial_{\bar{u}}\int_{\max (\bar{u},2-\bar{u})}^{+\infty}u^3(u-\bar{u})\phi_u^2 du+\frac{1}{2}\int_{\max (\bar{u},2-\bar{u})}^{+\infty}u^3 {\phi_u}^2 du \\
&-\frac{\mu(2-\mu)}{2}[\int_{\max (\bar{u},2-\bar{u})}^{+\infty}\frac{1}{4}(2+\frac{3}{4})u \phi^2 du] \\
\lesssim &\frac{r(r+2)^3}{16}\left[\psi_0(r)+r\psi_1(r)+r\partial_r\psi_0(r)\right]^2
+\frac{\mu(2-\mu)}{256}r^3(r+2)^3\psi_0^2(r)\\
&+\int_{\max (\bar{u},2-\bar{u})}^{+\infty} u^3(u-\bar{u})\phi_u G du.\\
\end{aligned}
\end{equation}
Now we claim a Hardy type inequality
\begin{equation}\label{hardy}
\int_{\max (\bar{u},2-\bar{u})}^{+\infty} \phi^2 u du \leq -\frac{r^2(r+2)^2}{4}\psi_0^2(r)+\int_{\max (\bar{u},2-\bar{u})}^{+\infty} u^3 {\phi_u}^2 du.
\end{equation}
Hence we may get \eqref{Mora} by combining \eqref{Mora2} and  claim \eqref{hardy}, and then integrating with respect to $\bar{u}$ over $[\frac12, \bar{U}]$, since we have
\[
\frac12--\frac{\mu(2-\mu)}{2}\frac{1}{4}(2+\frac{3}{4})>0
\]
for $0<\mu<2$. We are left with the proof of claim \eqref{hardy}. By integration by parts, one has
\begin{equation}\label{Hardyproof1}
\begin{aligned}
\int_{\max (\bar{u},2-\bar{u})}^{+\infty} \phi^2 u du=&-\left(\frac{r+2}{2}\right)^2
\left(r\psi_0(r)\right)^2-\int_{\max (\bar{u},2-\bar{u})}^{+\infty}u(2\phi\phi_uu+\phi^2)du,\\
\end{aligned}
\end{equation}
which implies
\begin{equation}\label{Hardyproof2}
\begin{aligned}
&2\int_{\max (\bar{u},2-\bar{u})}^{+\infty} \phi^2 u du\\
=&-\left(\frac{r+2}{2}\right)^2
\left(r\psi_0(r)\right)^2-2\int_{\max (\bar{u},2-\bar{u})}^{+\infty}\phi\phi_uu^2du\\
\le& -\left(\frac{r+2}{2}\right)^2
\left(r\psi_0(r)\right)^2+2\left(\int_{\max (\bar{u},2-\bar{u})}^{+\infty}\phi^2udu\right)^{\frac12}\left(\int_{\max (\bar{u},2-\bar{u})}^{+\infty}u^3\phi_u^2du\right)^{\frac12},
\end{aligned}
\end{equation}
and this in turn gives \eqref{hardy}.
\end{proof}

Interpolating between \eqref{energy} in Lemma \ref{lem1} and \eqref{Mora} in Lemma \ref{lem2}, we get
\begin{lemma}\label{lem3}
Let $\phi(u, \bar{u})$ solve \eqref{inhom}, and $\bar{U}$ be a positive constant, then for
$0\leq s\leq 1$, there holds
\begin{equation}\label{Weig}
\begin{aligned}
&\sup\limits_{\frac{1}{2}\leq \bar{u} \leq \bar{U}} \left(\int_{\max (\bar{u},2-\bar{u})}^{+\infty} u^{3s}{(u-\bar{u})}^s{\phi_u}^2 du \right)^{\frac12}\\
\lesssim &{\eps\left( {\| \psi_0 \|}^2_{H^1(\mathbb{R}^3)}+{\| \psi_1 \|}^2_{L^2(\mathbb{R}^3)}\right)}^{\frac{1}{2}}+\int_{\frac{1}{2}}^{\bar{U}} {\left(\int_{\max (\bar{u},2-\bar{u})}^{+\infty} u^{3s}{(u-\bar{u})}^s G^2 du \right)}^{\frac{1}{2}}d\bar{u}.
\end{aligned}
\end{equation}
\end{lemma}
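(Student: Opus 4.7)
My plan is to obtain \eqref{Weig} by interpolating between the two endpoint estimates \eqref{energy} and \eqref{Mora}, using the linearity of \eqref{inhom}. First I would split $\phi=\phi^H+\phi^I$, where $\phi^H$ solves \eqref{inhom} with $G\equiv 0$ and the prescribed initial data while $\phi^I$ solves \eqref{inhom} with zero initial data and source $G$. The two resulting contributions to \eqref{Weig} will be handled separately.

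For $\phi^H$, both Lemma \ref{lem1} and Lemma \ref{lem2} give the same bound $\eps\bigl({\|\psi_0\|}^2_{H^1(\R^3)}+{\|\psi_1\|}^2_{L^2(\R^3)}\bigr)^{1/2}$, on the unweighted ($\alpha=0$) and fully weighted ($\alpha=1$) norms respectively, where $w(u,\bar{u}):=u^3(u-\bar{u})$ is nonnegative on the integration range. The elementary inequality $w^s\le 1+w$ valid for $s\in[0,1]$ then yields
\[
\int u^{3s}(u-\bar{u})^s(\phi^H_u)^2\,du\le \int(\phi^H_u)^2\,du+\int w(\phi^H_u)^2\,du,
\]
which immediately produces the $\eps$-term on the right-hand side of \eqref{Weig}.

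For $\phi^I$, I would view the Duhamel map $T:G\mapsto \phi^I_u$ as a linear operator. Lemmas \ref{lem1} and \ref{lem2} then read as boundedness of $T$ from $L^1_{\bar{u}}\bigl([\tfrac12,\bar{U}];L^2_u(w^\alpha\,du)\bigr)$ into $L^\infty_{\bar{u}}\bigl([\tfrac12,\bar{U}];L^2_u(w^\alpha\,du)\bigr)$ at the two endpoints $\alpha=0$ and $\alpha=1$. The classical identity $[L^2(d\mu),L^2(w\,d\mu)]_s=L^2(w^s\,d\mu)$ with equivalent norms, combined with complex interpolation of vector-valued Lebesgue spaces (the outer exponents $1$ and $\infty$ are common to both endpoints), transports this boundedness to $\alpha=s$, which is precisely the source-side part of \eqref{Weig}.

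The hard part will be making the mixed-norm interpolation step rigorous, but since the outer exponents do not move it reduces to interpolating the inner weighted $L^2_u$-spaces pointwise in $\bar{u}$, which is standard. A fully self-contained alternative is a three-lines argument: pair $\phi^I_u$ against a suitable dual test function $H$, consider the analytic family $F(z):=\iint u^{3z}(u-\bar{u})^z \phi^I_u H\,du\,d\bar{u}$ on the strip $0\le \Re z\le 1$, use \eqref{energy} and \eqref{Mora} to bound $|F|$ on the two boundary lines, and conclude by the Phragm\'en--Lindel\"of principle at $\Re z=s$.
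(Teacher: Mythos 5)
Your overall strategy — split $\phi=\phi^H+\phi^I$, absorb the data part, and transport the source estimate from the two endpoints to intermediate $s$ — is exactly the ``interpolation'' that the paper asserts without supplying details, so you are on the paper's intended path. The handling of $\phi^H$ via the elementary bound $w^s\le 1+w$ for $s\in[0,1]$, with $w=u^3(u-\bar u)\ge 0$, is clean and correct, and your first route for $\phi^I$ is also essentially right once phrased via Duhamel: writing $\phi^I_u(\cdot,\bar u)=\int_{1/2}^{\bar u}T_{\bar u_1\to\bar u}\,G(\cdot,\bar u_1)\,d\bar u_1$, Lemmas \ref{lem1} and \ref{lem2} give, uniformly in $(\bar u_1,\bar u)$, the two endpoint bounds for the fixed linear operator $T_{\bar u_1\to\bar u}$ between weighted $L^2_u$ spaces, and Stein--Weiss interpolation of $L^2$ with a power weight transports this to $w^s$; Minkowski in $\bar u_1$ and a sup in $\bar u$ then reassemble \eqref{Weig}. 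In other words, the outer $L^1_{\bar u}\to L^\infty_{\bar u}$ layer does not need to be interpolated at all — only the inner, $(\bar u_1,\bar u)$-frozen, weighted $L^2_u$ estimate does — which is what rescues your ``pointwise in $\bar u$'' remark.

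The three-lines alternative as you wrote it, however, would not prove the lemma, for two separate reasons. First, the weight in $F(z)$ should carry an exponent $z/2$, not $z$: the estimates \eqref{energy} and \eqref{Mora} have the weight \emph{inside} the $L^2$-norm, so the natural multiplier on $\phi^I_u$ is $w^{1/2}$ at the $\Re z=1$ endpoint. Second, and more seriously, holding $\phi^I_u$ fixed in $z$ makes the argument produce the wrong quantity. With $F(z)=\iint u^{3z}(u-\bar u)^z\phi^I_u H$, the boundary bounds obtained from \eqref{energy} and \eqref{Mora} are $\|G\|_{L^1_{\bar u}L^2_u}$ and $\|w^{1/2}G\|_{L^1_{\bar u}L^2_u}$ respectively, and Phragm\'en--Lindel\"of yields a bound by the \emph{product} $\|G\|^{1-s}\|w^{1/2}G\|^s$. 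By H\"older this dominates the target norm $\|w^{s/2}G\|_{L^1_{\bar u}L^2_u}$, so you would be proving a strictly weaker inequality than \eqref{Weig}. To obtain the sharp interpolated norm by the analytic-family method you must also let the source vary analytically: replace $G$ by $G^{(z)}=w^{(s-z)/2}G$, let $\phi^{(z)}$ solve \eqref{inhom} with source $G^{(z)}$ and zero data (so $\phi^{(s)}=\phi^I$), and work with $F(z)=\int u^{3z/2}(u-\bar u_0)^{z/2}\,\phi^{(z)}_u(u,\bar u_0)\,h(u)\,du$ for a test function $h\in L^2_u$. Then at $\Re z=0$ (energy) and $\Re z=1$ (Morawetz) the modulus $|G^{(z)}|$ contributes exactly $w^{s/2}|G|$ after the observation weight cancels, both boundary lines give $\lesssim\|w^{s/2}G\|_{L^1_{\bar u}L^2_u}\|h\|_{L^2_u}$, and the three-lines theorem closes the argument at $z=s$.
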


\section{Sobolev type inequalities}

In this section, we shall prove several Sobolev type inequalities. In the following we take $s=\frac{1}{4}+\frac{1}{2p}$ and denote
$$
M(\phi)(\bar{u})={\left(\frac{1}{2} \int_{\max (\bar{u},2-\bar{u})}^{+\infty} u^{3s}{(u-\bar{u})}^s{\phi_u}^2 du \right)}^{\frac{1}{2}}.
$$
\begin{lemma} \label{lem4}
For $\bar{u} \geq 1$, it holds that
\begin{equation}\label{sob1}
\sup\limits_{u} u^{\frac{2}{p}} \phi^2(u,\bar{u}) \lesssim {\left(M(\phi)(\bar{u})\right)}^2
\end{equation}
\end{lemma}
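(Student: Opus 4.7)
The strategy is to recover $\phi(u,\bar u)$ from its derivative via the fundamental theorem of calculus and then apply weighted Cauchy--Schwarz with the exact weight $u^{3s}(u-\bar u)^s$ that appears in the definition of $M(\phi)$. The choice $s=\frac14+\frac{1}{2p}$ is made precisely so that $4s-1=2/p$, and this identity is what forces the geometric integral to yield the sharp decay rate $u^{-2/p}$ required on the left-hand side.

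Since $\bar u\ge 1$ we have $\max(\bar u,2-\bar u)=\bar u$, and in the diamond coordinates $r=u-\bar u\ge 0$ forces $u\ge\bar u$. The compact support hypothesis \eqref{supp} together with finite propagation speed guarantees that $\phi(\cdot,\bar u)=(u-\bar u)\psi$ is supported in a bounded $u$-interval, hence $\phi(u,\bar u)\to 0$ as $u\to\infty$. Writing $\phi(u,\bar u)=-\int_u^{\infty}\phi_{u'}(u',\bar u)\,du'$ and applying Cauchy--Schwarz against the weight $(u')^{3s}(u'-\bar u)^s$ gives
\[
|\phi(u,\bar u)|^2 \;\le\; 2\,M(\phi)(\bar u)^2 \cdot \int_u^{\infty}(u')^{-3s}(u'-\bar u)^{-s}\,du'.
\]
The remaining task is to prove the purely geometric estimate
\[
\int_u^{\infty}(u')^{-3s}(u'-\bar u)^{-s}\,du' \;\lesssim\; u^{-2/p}
\]
uniformly in $\bar u\ge 1$ and $u\ge\bar u$.

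For this I would split into two regimes. If $u\ge 2\bar u$, then $u'-\bar u\ge u'/2$ on $[u,\infty)$, so the integrand is dominated by $2^{s}(u')^{-4s}$; since $4s-1=2/p>0$ a direct computation yields the target bound of order $u^{1-4s}=u^{-2/p}$. If instead $\bar u\le u\le 2\bar u$, then $u\sim \bar u$, and I would split the integral at $u'=2\bar u$: the tail on $[2\bar u,\infty)$ is bounded by the first case and is of order $\bar u^{-2/p}\sim u^{-2/p}$, while on the inner piece $[u,2\bar u]$ I use $(u')^{-3s}\le u^{-3s}$ together with the elementary bound $\int_u^{2\bar u}(u'-\bar u)^{-s}\,du'\le (1-s)^{-1}\bar u^{1-s}$, producing a constant multiple of $\bar u^{1-4s}=\bar u^{-2/p}\sim u^{-2/p}$. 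The only subtlety is the singularity of the reciprocal weight $(u'-\bar u)^{-s}$ along the axis $u'=\bar u$, but integrability there is automatic because $p>1$ gives $s<\tfrac34<1$, and the upper index $4s>1$ needed to integrate at infinity is likewise immediate; thus the choice of $s$ barely threads both constraints, which I view as the main (albeit mild) obstacle to keep in mind while writing the argument.
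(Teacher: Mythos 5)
Your proof is correct, but it takes a genuinely different route from the paper's. After the common first step (fundamental theorem of calculus plus Cauchy--Schwarz with the weight $(u')^{3s}(u'-\bar u)^{s}$ peeled off to produce $M(\phi)^2$), the paper is left with the companion integral $\int u^{5/(2p)-3/4}(u-\bar u)^{-s}\phi^2\,du$ and handles it by a Hardy-type integration by parts: writing $(u-\bar u)^{-s}du=\tfrac{1}{1-s}d(u-\bar u)^{1-s}$, dropping a term of favorable sign (using $\tfrac{5}{2p}-\tfrac34>0$), applying Cauchy--Schwarz once more, and then absorbing $(u-\bar u)^{2-s}\le u^{2-2s}(u-\bar u)^s$ to land back on $2M(\phi)^2$ — so the $\phi^2$-integral is ultimately controlled by re-expressing it as a $\phi_u^2$-integral, never by evaluating a geometric integral. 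You instead apply FTC to $\phi$ itself rather than to $\phi^2$, which leaves a purely geometric integral $\int_u^\infty(u')^{-3s}(u'-\bar u)^{-s}\,du'$, and you prove the sharp decay $\lesssim u^{-2/p}$ by the two-regime split $u\gtrless 2\bar u$, exploiting the identity $4s-1=2/p$. Both arguments are valid; yours is shorter and makes the role of the specific choice $s=\tfrac14+\tfrac{1}{2p}$ more transparent, while the paper's is structurally parallel to the Hardy inequality \eqref{hardy} of Lemma \ref{lem2} and avoids any explicit computation of geometric integrals. Your integrability constraints ($\tfrac14<s<1$, i.e.\ $p>1$) are in fact slightly weaker than the paper's requirement $\tfrac{5}{2p}-\tfrac34>0$, though both hold comfortably for the range $p\le 2$ considered. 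One small caveat you should keep in mind if you extend the argument: your case split gives constants depending only on $p$ and uniform over $\bar u\ge 1$, which is exactly what is needed, but the same strategy is reused for $\bar u\le 1$ in Lemma \ref{lem8} where the lower endpoint is $2-\bar u$ rather than $\bar u$ and a boundary term in $\psi_0$ survives; your direct-geometric route would need the analogous modification there.
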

\begin{proof}
Direct computation implies that
\begin{equation}\label{Sobstep1}
\begin{aligned}
u^{\frac{2}{p}} \phi^2(u,\bar{u})=&-u^{\frac{2}{p}}\int_u^{+\infty} \partial_{\lambda} \phi^2(\lambda,\bar{u}) d\lambda \\
\leq& 2 u^{\frac{2}{p}} \int_u^{+\infty} |\phi_\lambda| |\phi| d\lambda \\
\leq& 2 \int_u^{+\infty} \lambda^{\frac{2}{p}} |\phi_\lambda| |\phi| d\lambda \\
\leq&{\left( \int_{\max(\bar{u},2-\bar{u})}^{+\infty} u^{3s} {(u-\bar{u})}^s {\phi_u}^2 du \right)}^{\frac{1}{2}}\\
&\times{\left( \int_{\max(\bar{u},2-\bar{u})}^{+\infty} u^{\frac{5}{2p}-\frac{3}{4}} {(u-\bar{u})}^{-s} {\phi}^2 du \right)}^{\frac{1}{2}}.
\end{aligned}
\end{equation}
Since $1-s>0, \frac{5}{2p}-\frac{3}{4}>0$, then for $\bar{u}\geq 1$ we have
\begin{equation}\label{Sobstep2}
\begin{aligned}
&\int_{\max(\bar{u},2-\bar{u})}^{+\infty} u^{\frac{5}{2p}-\frac{3}{4}} {(u-\bar{u})}^{-s} {\phi}^2 du =\int_{\bar{u}}^{+\infty} u^{\frac{5}{2p}-\frac{3}{4}} {(u-\bar{u})}^{-s} {\phi}^2 du\\
=&\frac{1}{1-s}\int_{\bar{u}}^{+\infty} u^{\frac{5}{2p}-\frac{3}{4}}  {\phi}^2 d{(u-\bar{u})}^{1-s}\\
=&-\frac{1}{1-s}\int_{\bar{u}}^{+\infty} (\frac{5}{2p}-\frac{3}{4})u^{\frac{5}{2p}-\frac{7}{4}}  {\phi}^2(u-\bar{u})^{1-s}du \\ &-\frac{2}{1-s} \int_{\bar{u}}^{+\infty} u^{\frac{5}{2p}-\frac{3}{4}}  {(u-\bar{u})}^{1-s} \phi \phi_u du\\
\leq &\frac{2}{1-s} \int_{\bar{u}}^{+\infty} u^{\frac{5}{2p}-\frac{3}{4}}  {(u-\bar{u})}^{1-s} |\phi| |\phi_u| du \\
\lesssim &{\left( \int_{\bar{u}}^{+\infty} u^{\frac{5}{2p}-\frac{3}{4}}  {\phi}^2{(u-\bar{u})}^{-s} du\right)}^{\frac{1}{2}}{\left(\int_{\bar{u}}^{+\infty} u^{\frac{5}{2p}-\frac{3}{4}}{(u-\bar{u})}^{2-s}  {\phi_u}^2 du \right)}^{\frac{1}{2}},
\end{aligned}
\end{equation}
which implies
\begin{equation}\label{Sobstep3}
\begin{aligned}
&\int_{\max(\bar{u},2-\bar{u})}^{+\infty} u^{\frac{5}{2p}-\frac{3}{4}} {(u-\bar{u})}^{-s} {\phi}^2 du \\
\leq &\int_{\bar{u}}^{+\infty} u^{\frac{5}{2p}-\frac{3}{4}}{(u-\bar{u})}^{2-s}  {\phi_u}^2 du\\
\leq &\int_{\bar{u}}^{+\infty} u^{\frac{5}{2p}-\frac{3}{4}+2-2s}{(u-\bar{u})}^{s}  {\phi_u}^2 du\\
&\lesssim {\left(M(\phi)(\bar{u}) \right)}^2,
\end{aligned}
\end{equation}
where we have used the fact that $2-s \geq s$ and hence
$${(u-\bar{u})}^{2-s}\leq u^{2-2s}{(u-\bar{u})}^s,$$
and Lemma \ref{lem4} follows.
\end{proof}
In a similar way, we can prove
\begin{lemma} \label{lem8}
For $\bar{u} \le 1$, it holds that
\begin{equation}\label{sob2}
\sup\limits_{u} u^{\frac{2}{p}} \phi^2(u,\bar{u}) \lesssim {\left(M(\phi)(\bar{u})\right)}^2+(1-\bar{u})^{3-s}\psi_0^2(2-2\bar{u}).
\end{equation}
\end{lemma}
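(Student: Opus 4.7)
The plan is to follow the proof of Lemma~\ref{lem4} almost line by line. The only change is that for $\bar{u}\leq 1$ the lower limit of integration becomes $\max(\bar{u},2-\bar{u})=2-\bar{u}>\bar{u}$, so the integration by parts on $[2-\bar{u},+\infty)$ no longer enjoys a vanishing factor at the lower endpoint. This surviving boundary term is precisely what produces the extra $(1-\bar{u})^{3-s}\psi_0^2(2-2\bar{u})$ piece in \eqref{sob2}.

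First I would reproduce the Cauchy--Schwarz argument of \eqref{Sobstep1} with the obvious replacement of the lower limit to obtain
$$u^{2/p}\phi^2(u,\bar{u})\;\lesssim\; M(\phi)(\bar{u})\,\Bigl(\int_{2-\bar{u}}^{+\infty}\lambda^{\frac{5}{2p}-\frac{3}{4}}(\lambda-\bar{u})^{-s}\phi^2\,d\lambda\Bigr)^{1/2}.$$
Then I would estimate the integral, call it $J$, by integration by parts with $(u-\bar{u})^{-s}du=(1-s)^{-1}d(u-\bar{u})^{1-s}$. The analog of \eqref{Sobstep2} produces three pieces: (i)~the non-positive quadratic bulk $-\frac{1}{1-s}(\frac{5}{2p}-\frac{3}{4})\int u^{\frac{5}{2p}-\frac{7}{4}}\phi^2(u-\bar{u})^{1-s}du$, which is dropped exactly as in Lemma~\ref{lem4} since $\frac{5}{2p}-\frac{3}{4}>0$; (ii)~the cross term $-\frac{2}{1-s}\int u^{\frac{5}{2p}-\frac{3}{4}}\phi\phi_u(u-\bar{u})^{1-s}du$, which is treated by Cauchy--Schwarz and the weight inequality $(u-\bar{u})^{2-s}\leq u^{2-2s}(u-\bar{u})^s$ from \eqref{Sobstep3} to produce a bound $\frac{2}{1-s}J^{1/2}K^{1/2}$ with $K\lesssim M(\phi)^2$; and (iii)~the new boundary term $-B/(1-s)$, where $B:=(2-\bar{u})^{\frac{5}{2p}-\frac{3}{4}}\phi^2(2-\bar{u},\bar{u})(2-2\bar{u})^{1-s}$. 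Using the initial value $\phi(2-\bar{u},\bar{u})=(2-2\bar{u})\psi_0(2-2\bar{u})$ read off $\phi=r\psi$ at $t=0$, together with the boundedness of $(2-\bar{u})^{\frac{5}{2p}-\frac{3}{4}}$ for $\bar{u}\in[\tfrac{1}{2},1]$, a direct computation gives $B\simeq(1-\bar{u})^{3-s}\psi_0^2(2-2\bar{u})$.

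Moving the boundary term to the left side leaves the inequality $J+B/(1-s)\lesssim J^{1/2}M(\phi)$. Young's inequality then yields $J\lesssim M(\phi)^2$ (and incidentally also $B\lesssim M(\phi)^2$), and substituting $J^{1/2}\lesssim M(\phi)$ back into the first display gives $u^{2/p}\phi^2(u,\bar{u})\lesssim M(\phi)^2$, which is even stronger than \eqref{sob2}; since $B\geq 0$, the stated bound \eqref{sob2} follows at once. The main technical obstacle is simply the bookkeeping of $B$: the $r$-factor in $\phi=r\psi$, the squaring in $\phi^2$, and the $(2-2\bar{u})^{1-s}$ from integration by parts must be combined with the bounded prefactor $(2-\bar{u})^{\frac{5}{2p}-\frac{3}{4}}$ to recover exactly the exponent $3-s$ in $(1-\bar{u})^{3-s}\psi_0^2(2-2\bar{u})$; beyond this the estimates are identical to those in Lemma~\ref{lem4}.
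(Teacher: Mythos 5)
Your proof is correct and follows essentially the same route as the paper's: reproduce the Cauchy--Schwarz step from Lemma \ref{lem4} with the lower limit $2-\bar{u}$, integrate by parts against $d(u-\bar{u})^{1-s}$, drop the non-positive bulk term, bound the cross term via Cauchy--Schwarz and the weight comparison $(u-\bar{u})^{2-s}\le u^{2-2s}(u-\bar{u})^s$, and account for the boundary contribution at $u=2-\bar{u}$ using $\phi(2-\bar{u},\bar{u})=(2-2\bar{u})\psi_0(2-2\bar{u})$, which indeed produces the exponent $3-s$. The one genuine difference is that you track the sign of the boundary term correctly: since $[\,u^{\frac{5}{2p}-\frac{3}{4}}\phi^2(u-\bar{u})^{1-s}\,]_{2-\bar{u}}^{\infty}$ contributes $-B\le 0$ at the lower endpoint, that term can be moved to the left and discarded, yielding the stronger bound $\sup_u u^{2/p}\phi^2\lesssim M(\phi)^2$ with no data term at all. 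The paper instead records this boundary contribution with a $+$ sign in \eqref{Sob2step2}, a conservative (but still valid) overestimate, which is why the extra $(1-\bar{u})^{3-s}\psi_0^2(2-2\bar{u})$ appears in the statement of the lemma. Your version thus proves \eqref{sob2} a fortiori, and in fact shows the data term on the right-hand side of \eqref{sob2} is superfluous.
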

\begin{proof}
Since $u\ge 2-\bar{u}$, then as in the proof of the last lemma we have
\begin{equation}\label{Sob2step1}
\begin{aligned}
u^{\frac{2}{p}} \phi^2(u,\bar{u})=&-u^{\frac{2}{p}}\int_u^{+\infty} \partial_{\lambda} \phi^2(\lambda,\bar{u}) d\lambda \\
\leq& 2 u^{\frac{2}{p}} \int_u^{+\infty} |\phi_\lambda| |\phi| d\lambda \\
\leq& 2 \int_u^{+\infty} \lambda^{\frac{2}{p}} |\phi_\lambda| |\phi| d\lambda \\
\leq&{\left( \int_{2-\bar{u}}^{+\infty} u^{3s} {(u-\bar{u})}^s {\phi_u}^2 du \right)}^{\frac{1}{2}}\\
&\times{\left( \int_{2-\bar{u}}^{+\infty} u^{\frac{5}{2p}-\frac{3}{4}} {(u-\bar{u})}^{-s} {\phi}^2 du \right)}^{\frac{1}{2}}.
\end{aligned}
\end{equation}
Since $1-s>0, \frac{5}{2p}-\frac{3}{4}>0$, then for $\bar{u}\le 1$ we have
\begin{equation}\label{Sob2step2}
\begin{aligned}
&\int_{2-\bar{u}}^{+\infty} u^{\frac{5}{2p}-\frac{3}{4}} {(u-\bar{u})}^{-s} {\phi}^2 du\\
=&\frac{1}{1-s}\int_{2-\bar{u}}^{+\infty} u^{\frac{5}{2p}-\frac{3}{4}}  {\phi}^2 d{(u-\bar{u})}^{1-s}\\
\le&\frac{1}{1-s}\left((1-\bar{u})^{1-s}\phi^2(2-\bar{u}, \bar{u}) -2\int_{2-\bar{u}}^{+\infty} u^{\frac{5}{2p}-\frac{3}{4}}  {(u-\bar{u})}^{1-s} \phi \phi_u du\right)\\
\lesssim &{\left( \int_{2-\bar{u}}^{+\infty} u^{\frac{5}{2p}-\frac{3}{4}}  {\phi}^2{(u-\bar{u})}^{-s} du\right)}^{\frac{1}{2}}{\left(\int_{2-\bar{u}}^{+\infty} u^{\frac{5}{2p}-\frac{3}{4}}{(u-\bar{u})}^{2-s}  {\phi_u}^2 du \right)}^{\frac{1}{2}}\\
&+(1-\bar{u})^{3-s}\psi^2(2-2\bar{u}),
\end{aligned}
\end{equation}
which implies as in \eqref{Sobstep3}
\begin{equation}\label{Sob2step3}
\begin{aligned}
&\int_{2-\bar{u}}^{+\infty} u^{\frac{5}{2p}-\frac{3}{4}} {(u-\bar{u})}^{-s} {\phi}^2 du \\
\leq &\int_{\bar{u}}^{+\infty} u^{\frac{5}{2p}-\frac{3}{4}}{(u-\bar{u})}^{2-s}  {\phi_u}^2 du+(1-\bar{u})^{3-s}\psi^2(2-2\bar{u})\\
&\lesssim {\left(M(\phi)(\bar{u}) \right)}^2++(1-\bar{u})^{3-s}\psi^2(2-2\bar{u}),
\end{aligned}
\end{equation}
then Lemma \ref{lem8} follows from \eqref{Sob2step1} and \eqref{Sob2step3}.
\end{proof}

\section{Proof of Theorem \ref{thm1.1}}

In this section we will prove the main result(Theorem \ref{thm1.1}) by global iteration method. For any $\bar{\psi}$ with $\bar{\phi}=r\bar{\psi}$, such that $\sup\limits_{\frac12\leq \bar{u}\leq \bar{U}} M(\bar{\phi})(\bar{u}) \leq M_0 \varepsilon$($M_0$ is a positive constant to be determined), we define a map $\mathcal{F}:\bar{\phi} \to \phi$ such that $\phi$ solves
\begin{equation}\label{fixpoint}
\begin{cases}
\phi_{u\bar{u}}+\frac{\mu(2-\mu)\phi}{4{(u-\bar{u})}^2}=\frac{{|\bar{\phi}|}^p}{{(u-\bar{u})}^{p-1}{(u+\bar{u})}^{\frac{\mu(p-1)}{2}}} \\
t=0:\,  \phi=\eps\psi_0, \phi_t=\eps\psi_1.
\end{cases}
\end{equation}
We want to prove that $\mathcal{F}$ maps the set
$$
\textbf{X}=\{ \phi=r\psi | \psi (0, r)=\eps\psi_0(r), \psi_t(0,r)=\eps\psi_1(r),\sup\limits_{1/2\leq \bar{u} \leq \bar{U}} M(\phi)(\bar{u}) \leq M_0 \varepsilon \}
$$
to itself and is a contraction map, thus, for any $\bar{\phi}_1, \bar{\phi}_2\in \textbf{X}$, it holds that
$$
\sup\limits_{1/2\leq \bar{u} \leq \bar{U}} M(\phi_1-\phi_2)(\bar{u}) \leq \frac{1}{2}\sup\limits_{1/2\leq \bar{u} \leq \bar{U}} M(\bar{\phi}_1-\bar{\phi}_2)(\bar{u}).
$$
Then by contraction mapping theorem, $\mathcal{F}$ has a fixed point, which is our global solution. We only prove that $\mathcal{F}$ maps the set $\textbf{X}$ to itself, the contraction can be proved in a similar way.

Take $s=\frac{1}{4}+\frac{1}{2p}$, for $\bar{\phi}\in \textbf{X}$, by Lemma \ref{lem3} one has
\begin{equation}\label{5d}
\begin{aligned}
\sup\limits_{\frac{1}{2} \leq \bar{u} \leq \bar{U}} M(\phi)(\bar{u}) \lesssim & \varepsilon+\int_1^{\bar{U}} {\left( \int_{\bar{u}}^{+\infty} u^{3s} {(u-\bar{u})}^s G^2 du \right)}^{\frac{1}{2}} d\bar{u}\\
&+\int_{\frac{1}{2}}^1 {\left( \int_{2-\bar{u}}^{+\infty} u^{3s} {(u-\bar{u})}^s G^2 du \right)}^{\frac{1}{2}} d\bar{u}\\
\triangleq & \varepsilon+I+II.\\
\end{aligned}
\end{equation}
We first estimate the integral in $I$ for $p_S(3+\mu)<p\le 2$. By Lemma \ref{lem4} one has
\begin{equation}\label{6d}
\begin{aligned}
&\int_{\bar{u}}^{+\infty} u^{3s} {(u-\bar{u})}^s G^2 du \\
=& \int_{\bar{u}}^{+\infty} u^{3s} {(u-\bar{u})}^s{(u-\bar{u})}^{-2(p-1)}{(u+\bar{u})}^{-\mu(p-1)} {|\bar{\phi}|}^{2p} du \\
\leq &{M(\bar{\phi})}^{2(p-1)} \int_{\bar{u}}^{+\infty} u^{3s-\mu(p-1)} {(u-\bar{u})}^{s-2(p-1)} {|\bar{\phi}|}^2 u^{-\frac{2(p-1)}{p}} du \\
=&   {M(\bar{\phi})}^{2(p-1)} \int_{\bar{u}}^{+\infty} u^{-\alpha} {(u-\bar{u})}^{-\beta-1} {|\bar{\phi}|}^2  du,
\end{aligned}
\end{equation}
where for $p_S(3+\mu)<p\le 2$ and $1.5\le \mu<2$
\begin{equation}\label{cond1}
\begin{aligned}
\alpha&=-3(\frac{1}{4}+\frac{1}{2p})+\mu(p-1)+\frac{2}{p}(p-1)\\
&=\frac{(\mu+2)p^2-(\mu+4)p-2}{p}-2p+\frac{21}{4}-\frac{3}{2p}\\
&>0,\\
\beta&=-(\frac{1}{4}+\frac{1}{2p})+2(p-1)-1\ge 0,\\
2&-\beta>0,\\
1&-\beta\ge s,\\
\end{aligned}
\end{equation}
which leads to
\begin{equation}\label{I1}
\begin{aligned}
&\int_{\bar{u}}^{+\infty} u^{-\alpha} {(u-\bar{u})}^{-1-\beta} \phi^2 du\\
=&-\frac{1}{\beta}\int_{\bar{u}}^{+\infty} u^{-\alpha}  \phi^2 d{(u-\bar{u})}^{-\beta} \\
=&-\frac{1}{\beta}\int_{\bar{u}}^{+\infty} \alpha u^{-\alpha-1}  \phi^2 {(u-\bar{u})}^{-\beta}du+\frac{2}{\beta}\int_{\bar{u}}^{+\infty} u^{-\alpha}  \phi \cdot \phi_u {(u-\bar{u})}^{-\beta} du\\
\leq&\frac{2}{\beta}\int_{\bar{u}}^{+\infty} u^{-\alpha}  \phi \cdot \phi_u {(u-\bar{u})}^{-\beta} du\\
\lesssim& {\left(\int_{\bar{u}}^{+\infty} u^{-\alpha}   {(u-\bar{u})}^{-\beta-1} u^2 du \right)}^{\frac{1}{2}}{\left(\int_{\bar{u}}^{+\infty} u^{-\alpha}   {(u-\bar{u})}^{-\beta+1} {\phi_u}^2 du \right)}^{\frac{1}{2}}\\
\lesssim& {\left(\int_{\bar{u}}^{+\infty} u^{-\alpha}   {(u-\bar{u})}^{-\beta-1} u^2 du \right)}^{\frac{1}{2}}{\left(\int_{\bar{u}}^{+\infty} u^{-\alpha-\beta+1-4s}u^{3s} {(u-\bar{u})}^{s} {\phi_u}^{2} du \right)}^{\frac{1}{2}},\\
\end{aligned}
\end{equation}
where we used the fact that for $1-\beta\ge s$
\[
\begin{aligned}
{(u-\bar{u})}^{-\beta+1}&\leq {(u-\bar{u})}^{-\beta+1-s}{(u-\bar{u})}^{s}\\
& \leq u^{1-\beta-s}{(u-\bar{u})}^{s}.
\end{aligned}
\]
It is easy to see
\[
\begin{aligned}
\gamma&\triangleq \alpha+\beta+4s-1\\
&=\mu(p-1)+\frac{2}{p}(p-1)+2(p-1)-2\\
&=\frac{(\mu+2)p^2-(\mu+4)p-2}{p}+2\\
&>2,
\end{aligned}
\]
the by combining
$$
u \geq \bar{u}, \, u^{-\gamma} \leq {\bar{u}}^{-\gamma},
$$
we obtain for $p_S(3+\mu)<p\le 2$ and $1.5\le \mu<2$
\begin{equation}\label{I2}
\begin{aligned}
&\int_{\bar{u}}^{+\infty} u^{3s} {(u-\bar{u})}^s G^2 du \\
\lesssim& \bar{u}^{-\gamma}M(\bar{\phi})^{2p}(\bar{u}).\\
\end{aligned}
\end{equation}

We are going to estimate the integral in $II$ for $p_S(3+\mu)<p\le 2$. By Lemma \ref{lem8} we get
\begin{equation}\label{II1}
\begin{aligned}
&\int_{2-\bar{u}}^{+\infty} u^{3s} {(u-\bar{u})}^s G^2 du \\
=& \int_{2-\bar{u}}^{+\infty} u^{3s} {(u-\bar{u})}^s{(u-\bar{u})}^{-2(p-1)}{(u+\bar{u})}^{-\mu(p-1)} {|\bar{\phi}|}^{2p} du \\
\leq &\left(M(\bar{\phi})^{2(p-1)}+(1-\bar{u})^{(3-s)(p-1)}\psi_0^{2(p-1)}(2-2\bar{u})\right)\\
&\times \int_{2-\bar{u}}^{+\infty} u^{3s-\mu(p-1)-\frac{2(p-1)}{p}} {(u-\bar{u})}^{s-2(p-1)} {|\bar{\phi}|}^2 du \\
\triangleq &\left(M(\bar{\phi})^{2(p-1)}+(1-\bar{u})^{(3-s)(p-1)}\psi_0^{2(p-1)}(2-2\bar{u})\right)\\
&\times \int_{2-\bar{u}}^{+\infty} u^{-\alpha} {(u-\bar{u})}^{-\beta-1} {|\bar{\phi}|}^2 du,\\
\end{aligned}
\end{equation}
where $\alpha, \beta$ is the same as those in \eqref{cond1} and the same conditions are satisfied. Then we may estimate the last term in the above inequality in a similar way as that of \eqref{I1}, and the only difference is that the initial data will appear in this case, thus we have
\begin{equation}\label{II2}
\begin{aligned}
&\int_{2-\bar{u}}^{+\infty} u^{-\alpha} {(u-\bar{u})}^{-1-\beta} \bar{\phi}^2 du\\
\lesssim& (1-\bar{u})^{2-\beta}\psi_0^2(2-2\bar{u})+\int_{2-\bar{u}}^{+\infty} u^{-\alpha} {(u-\bar{u})}^{-1+\beta} \bar{\phi}_u^2 du\\
\lesssim& (1-\bar{u})^{2-\beta}\psi_0^2(2-2\bar{u})+\bar{u}^{-\gamma}M(\bar{\phi})^2,\\
\end{aligned}
\end{equation}
this implies by combining \eqref{II1}
\begin{equation}\label{II3}
\begin{aligned}
&\int_{2-\bar{u}}^{+\infty} u^{3s} {(u-\bar{u})}^s G^2 du \\
\lesssim& \left[M(\bar{\phi})^{2(p-1)}+(1-\bar{u})^{(3-s)(p-1)}\psi_0^{2(p-1)}(2-2\bar{u})\right]\\
&\times \left[\bar{u}^{-\gamma}M(\bar{\phi})^2+(1-\bar{u})^{2-\beta}\psi_0^2(2-2\bar{u})\right].\\
\end{aligned}
\end{equation}

Plugging \eqref{I2} and \eqref{II3} into \eqref{5d}, finally we get for $p_S(3+\mu)<p\le 2$ and $1.5\le \mu<2$
\begin{equation}\label{I6}
\sup\limits_{\frac{1}{2}\leq\bar{u}\leq\bar{U}} M(\phi)(\bar{u}) \leq C_0 \varepsilon+\tilde{C}_0 \varepsilon^p+C_1\sup\limits_{\frac{1}{2}\leq\bar{u}\leq\bar{U}} {M(\bar{\phi})}^p(\bar{u}),
\end{equation}
where $C_0, \tilde{C}_0, C_1$ are some positive constants independent of $\varepsilon$,
and if we take $\varepsilon \leq 1$, then there exists $C_2>0$ such that
\begin{equation}\label{I7}
\sup\limits_{\frac{1}{2}\leq\bar{u}\leq\bar{U}} M(\phi)(\bar{u}) \leq C_2 \varepsilon+C_1\sup\limits_{\frac{1}{2}\leq\bar{u}\leq\bar{U}} {M(\bar{\phi})}^p(\bar{u}).
\end{equation}
Set $M_0=2C_2$, then $\mathcal{F}$ maps $X$ to $X$ provided that
$$
C_1M_0(M_0\varepsilon)^{p-1} \leq C_2.
$$
In a similar way, we can prove $\mathcal{F}$ is a contraction mapping, this finishes the proof of Theorem \ref{thm1.1}.
\section*{Acknowledgement}

Ning-An Lai is supported by NSF of Zhejiang Province(LY18A010008) and NSFC 11771194, Yi Zhou is supported by Key Laboratory of Mathematics for Nonlinear
Sciences (Fudan University), Ministry of Education of China, Shanghai Key Laboratory for Contemporary
Applied Mathematics, School of Mathematical Sciences, Fudan University, NSFC (11421061), 973 program
(2013CB834100) and 111 project.


\end{document}